\newtheorem{theorem}{Theorem}[section]
\newtheorem{lemma}[theorem]{Lemma}
\newtheorem{remark}{Remark}
\newtheorem{definition}{Definition}
\DeclareMathOperator*{\esup}{ess\ sup}
\journal{...}
\begin{document}

\begin{frontmatter}

\title{Existence results for the Cox–Ingersoll–Ross model  \\ with variable exponent diffusion}
\author{Mustafa Avci}

\affiliation{organization={Faculty of Science and Technology, Applied Mathematics\\  Athabasca University},%Department and Organization
            city={Athabasca},
            postcode={T9S 3A3},
            state={AB},
            country={Canada}}
\ead{mavci@athabascau.ca (primary) & avcixmustafa@gmail.com}

\begin{abstract}
\noindent This study proposes a new  stochastic model where the diffusion coefficient involves a state-dependent variable exponent function $p(\cdot)$. This new theoretically flexible framework generalizes the classical Cox–Ingersoll–Ross model. The existence, uniqueness, and higher moment properties of solutions are analyzed. The validity and efficiency of the model is illustrated with numerical experiments. Finally, a detailed analysis for It\^o vs Stratonovich interpretations of the proposed model is provided.
\end{abstract}

\begin{keyword} Stochastic process; variable exponent; Cox–Ingersoll–Ross (CIR) model; truncation procedure; Picard iterations; non-Lipschitz diffusion coefficient; moment estimates.
\MSC[2008] 60G07; 60H15; 60H20; 60H30
\end{keyword}

\end{frontmatter}

\section{Introduction}

\label{sec1}
In this paper, we study the existence and uniqueness of solutions for the following nonlinear stochastic differential equation with variable exponent in the diffusion
\begin{align} \label{eq.1}
dv(t) &=\kappa (\theta-v(t))dt + \xi v(t)^{p(v(t))}dW(t),\,\, t\in [0,T]
\end{align}
which models the evolution of the state variable $v(t)$. By the definition of stochastic differential, the equation \eqref{eq.1} is equivalent to the following stochastic integral equation
\begin{equation} \label{eq.2}
v(t) = v_0 + \int_0^t \kappa (\theta-v(s)) ds + \int_0^t \xi v(s)^{p(v(s))}dW(s),
\end{equation}
where the initial condition $v(0) = v_0$ is a random variable; $W(t)$ is a standard Brownian motion; $\kappa, \theta, \xi> 0$ are real parameters; $p(\cdot)$ is a function of the state variable $v(t)$ satisfying some hypotheses.\\

The nonlinear stochastic differential equation \eqref{eq.1} can be considered as a generalized version of the Cox–Ingersoll–Ross (CIR) model \cite{cir85}. In the CIR model, the variance (diffusion) process $v(t)=\{v(t):\, t\geq 0\}$ evolves by
\begin{align} \label{eq.1a}
\mathrm{(CIR)}
\quad
dv(t) &=\kappa (\theta-v(t))dt + \xi \sqrt{v(t)}dW(t),
\end{align}
where $\kappa > 0$ is the speed of mean reversion, $\theta>0$ is the long‐run level, and $\xi>0$ is the volatility parameter.  To generalize \eqref{eq.1a}, we replace $\sqrt{v(t)}$ by $v(t)^{p(v(t))}$ for a state‐dependent exponent function $p:(0,\infty)\to\mathbb{R}$ and obtain the \textit{generalized CIR model (GM)}, i.e. the nonlinear stochastic differential equation \eqref{eq.1}.\\

The CIR model is first used to study the interest rate dynamics and then used in the Heston model
to describe the stochastic volatility \cite{heston93}. Since then it has been used in many different models involving credit‐risk and default intensity models, population biology and ecological models, and diffusion in heterogeneous media to name a few.\\

Due to its strong relevancy, it is necessary to mention the paper \cite{pkm10} in which the authors use various samples of S\&P 500 index return data and compare the empirical performance of the
Heston model with five simple alternatives to maximise model fit for the samples, all of which can be described by
\begin{equation}\label{eq.1b}
dv(t) = \kappa v^{a}(t)(\theta-v(t))dt + \xi v(t)^{b}dW(t),\,\, t \in [0,T],
\end{equation}
for $a=\{0,1\},\, b=\{1/2,1,3/2\}$, we call these $\{a, b\}$ values as $(a,b)$-family.\\
The relationship between our generalized model and \eqref{eq.1b} is immediate and can be interpreted as follows:
\begin{itemize}
  \item If $p(v)\equiv b \in \{1/2,1\}$ is constant for all $v$ and $a=0$ in \eqref{eq.1b}, then GM coincides with the corresponding member of the $(a,b)$-family; that is, GM is algebraically identical to the $(a,b)$-member with exponent $b$. In particular $p\equiv 1/2$ recovers the classical CIR case.
  \item If $p(\cdot)$ is not constant (the case studied in this paper), then GM and \eqref{eq.1b} are different models. GM permits a state-dependent diffusion exponent, and hence requires a state-dependent analysis, while \eqref{eq.1b} describes fixed algebraic powers for the diffusion.\\
\end{itemize}
We also would like to mention that extensions of the CIR framework to more flexible dynamics have been studied in various directions (time-changed intensities, switching regimes, etc.) (see e.g., \cite{ml14}).  From an analytical perspective, pathwise uniqueness and strong-solution techniques for SDEs with non-Lipschitz coefficients have been refined in recent works (see, e.g., \cite{s17}), and rigorous positivity-preserving approximation results for CIR-type dynamics are available (see, e.g., \cite{cr18}).\\
Motivated mainly by the aforementioned works, we introduce the nonlinear stochastic differential equation \eqref{eq.1} as a natural generalization of the classical CIR model.
The primary motivation for replacing the classical square-root diffusion $\sqrt{v(t)}$ by a state-dependent power $v(t)^{p(v(t))}$ is to allow the local noise intensity to react flexibly to the current state of the process where such flexibility can capture regime-dependent amplification of fluctuations; for example, stronger noise when the variance is large or a permanently elevated baseline of noise during crisis regimes.  However, this generalization creates two interrelated mathematical and numerical difficulties that we address in this work.\\
First, the map $v\mapsto v^{p(v)}$ typically fails to be uniformly Lipschitz on $(0,\infty)$ and may violate standard linear growth conditions.  In particular, the classical Picard–Lindelöf (or standard SDE) framework and many elementary moment estimates no longer apply since near zero the nonlinearity $v^{p(v)}$ can be weaker or stronger than the square-root, and globally the rate of growth of the diffusion coefficient depends on the path through $p(\cdot)$.  These features complicate the construction of a global strong solution,  the derivation of higher-order moment bounds, and positivity arguments which are common for the CIR model. To overcome these difficulties, we approximate the problematic coefficient $v\mapsto v^{p(v)}$ by a family of uniformly Lipschitz functions (obtained by smoothing and truncation techniques near problematic regions).  This yields local solutions via standard Picard iterations.  To extend local solutions to a global, strictly positive strong solution we combine a truncation (cut-off) procedure with a successive-approximation argument and employ measure-theoretic tools to pass to the limit.  Uniqueness and the strong-solution property are then established using the standard existence theorems (Lemma \ref{Lem:2.1}). This two‐step procedure establishes both the existence and uniqueness of a strictly positive strong solution to \eqref{eq.1}.\\
Second, as well-known, non-uniform Lipschitzity weakens the usual convergence guarantees for standard numerical schemes; for example, explicit Euler–Maruyama can generate false negative values or large discretization noise when not applied carefully. To handle this, in our simulations we use the Euler–Maruyama scheme with two practical stabilizations: (i) a positivity truncation to avoid false negative iterates, and (ii) a sufficiently small time step together with parameter sensitivity checks to verify robustness.  As well, Monte Carlo comparisons are performed with perfectly coupled Brownian increments (the same $\Delta W_k$ for each paired simulation), which isolates the effect of the exponent function $p(\cdot)$ from sampling variability.\\

The paper is organized as follows. In Section \ref{Sec2}, we first provide background on measure-theoretic probability and the theory of stochastic process. Then we obtain an auxiliary result, Lemma \ref{Lem:2.3}, which ensures the well-definedness of \eqref{eq.1}. In Section \ref{Sec3}, we prove the existence and uniqueness of solutions. In Section \ref{Sec4}, we obtain an estimate on the higher moments of the solution. In Section \ref{Sec5}, to confirm our model and theoretical results, we conduct numerical experiments.
In Appendix, we provide a detailed analysis for It\^o vs Stratonovich interpretations of our model.

\section{Preliminaries and auxiliary results} \label{Sec2}

We start with some basic concepts of the measure-theoretic probability and the theory of stochastic process. For more details, we refer the reader to, e.g., \cite{bz98,ovidiu22,friedman75,mao07,oksendal03}.\\
\noindent Let $W(t),\, t\geq 0$ be a Brownian motion on a probability space $(\Omega, \mathcal{F}, \mathbb{P})$ under the real-world measure $\mathbb{P}$. Let $\mathcal{F}_t$, $t\geq 0$, be an increasing family of $\sigma$-fields, called a \emph{filtration}, denoted by $\{\mathcal{F}_t\}_{t \geq 0}$. Then $(\Omega, \mathcal{F}, \{\mathcal{F}_t\}_{t \geq 0}, \mathbb{P})$ is called a \emph{filtered probability space}.\\
A \emph{stochastic process} is an indexed collection of random variables $X(t)=\{X(t):\, t_0\leq t \leq T\}$, with $0\leq t_0 < T< \infty$. $X(t)$ is said to be \emph{adapted} to $\mathcal{F}_{t}$ if for each $t \in [t_0,T]$, $X(t)$ is $\mathcal{F}_{t}$-measurable.\\
For every $\omega \in \Omega$, the function $t\longmapsto X(t,\omega)$ is called a \emph{path} (or \emph{sample path}) of $X(t)$.\\

\noindent We denote by $\mathcal{L}^{q}[t_0,T]$ ($1\leq q \leq \infty$) the class of real-valued $\mathcal{F}_{t}$-adapted stochastic processes $X(t)$  satisfying:
\begin{align*}
\mathbb{P}\left\{\int_{t_0}^{T}|X(t)|^{q}dt<\infty \right\}=1 \, \text{ if }\, 1\leq q < \infty \quad \text{and} \quad
\mathbb{P}\left\{\esup_{t_0 \leq t \leq T}|X(t)|<\infty \right\}=1\, \text{ if }\, q=\infty.
\end{align*}
By $\mathcal{M}^{q}[t_0,T]$ we denote the subset of $\mathcal{L}^{q}[t_0,T]$ consisting of all stochastic processes $X(t)$ satisfying:
\begin{equation*}
\mathbb{E}\int_{t_0}^{T}|X(t)|^{q}dt<\infty \, \text{ if }\, 1\leq q < \infty \quad \text{and} \quad \mathbb{E}\left(\esup_{t_0 \leq t \leq T}|X(t)| \right)<\infty\, \text{ if }\, q=\infty.
\end{equation*}
Consider the $1$-dimensional stochastic differential equation of It\^{o} type
\begin{equation}\label{eq.4a}
dX(t) = \mu(X(t),t)dt + \sigma(X(t),t)dW(t), \quad t\in[t_0,T],
\end{equation}
with the initial value $X(t_0)=X_0$, where $X_0 \in \mathbb{R}$ is an  $\mathcal{F}_{t_0}$-measurable random variable such that $\mathbb{E}|X_0|^2<\infty$.\\
The equation \eqref{eq.4a} is equivalent to the following stochastic integral equation
\begin{equation}\label{eq.4b}
X(t) = X_0 + \int_{t_0}^{t} \mu(X(s),s)ds + \int_{t_0}^{t} \sigma(X(s),s)dW(s), \quad t\in[t_0,T],
\end{equation}
where $\mu:\mathbb{R}\times[t_0,T]\to \mathbb{R}$ and $\sigma:\mathbb{R}\times[t_0,T]\to \mathbb{R}$ are both Borel measurable functions.\\
\begin{definition}\label{Def:2.1}
A real-valued stochastic process $X(t)$ is called a solution of equation \eqref{eq.4a} if it has the following properties:
\begin{itemize}
  \item [$(i)$] $X(t)$ is continuous and $\mathcal{F}_{t}$-adapted;
  \item [$(ii)$] $\mu(X(t),t) \in \mathcal{L}^{1}[t_0,T]$ and $\sigma(X(t),t) \in \mathcal{L}^{2}[t_0,T]$;
  \item [$(iii)$] equation \eqref{eq.4b} holds for every $t\in[t_0,T]$ with probability 1.
\end{itemize}
A solution $X(t)$ is said to be unique if any other solution $\hat{X}(t)$ is indistinguishable from $X(t)$; that is,
$$
\mathbb{P}\left\{X(t) =\hat{X}(t),\,\, t_0 \leq t \leq T\right\}=1.
$$
\end{definition}
In the sequel, the symbol $\vee$ denotes $a \vee b = \max\{a, b\}$.
\begin{lemma}\label{Lem:2.1}\cite{mao07}
Assume that there exist two positive constants $L$ and $K$ such that
\begin{itemize}
  \item [$(i)$] (Lipschitz condition) for all $x,y \in \mathbb{R}$ and $t \in [t_0,T]$
   \begin{equation}\label{eq.38g}
    |\mu(x,t)-\mu(y,t)|^2 \vee |\sigma(x,t)-\sigma(x,y)|^2 \leq L|x-y|^2;
   \end{equation}
   \item [$(ii)$] (Linear growth condition) for all $x\in \mathbb{R}$ and $ t \in [t_0,T]$
   \begin{equation}\label{eq.38h}
   |\mu(x,t)|^2 \vee |\sigma(x,t)|^2 \leq K(1+|x|^{2}).
   \end{equation}
\end{itemize}
Then there exists a unique solution $X(t)$ to equation \eqref{eq.4a} and $X(t) \in \mathcal{M}^{2}[t_0,T]$. Further,
\begin{equation}\label{eq.39a}
\mathbb{E}\sup_{t_0 \leq t \leq T}|X(t)|^{2} \leq (1+ 3\mathbb{E}|X_0|^2)\exp(3 K(T-t_0)(T-t_0+4)).
\end{equation}
\end{lemma}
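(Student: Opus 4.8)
The plan is to recognize this as the classical It\^o existence--uniqueness theorem under global Lipschitz and linear growth hypotheses, and to prove it by the Picard--Lindel\"of iteration adapted to the stochastic setting, with Gronwall's inequality supplying both the convergence of the iterates and the a priori bound \eqref{eq.39a}; the natural ambient space is $\mathcal{M}^2[t_0,T]$, which is complete under the uniform-in-time $L^2$ norm.

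First I would establish uniqueness. Given two solutions $X$ and $\hat{X}$ of \eqref{eq.4b} with the same initial datum, I would introduce the stopping times $\tau_n=\inf\{t\in[t_0,T]:|X(t)|\vee|\hat{X}(t)|\ge n\}$ so that all expectations below are finite, subtract the two integral equations, apply $(a+b)^2\le 2a^2+2b^2$, and estimate the drift term by the Cauchy--Schwarz inequality (producing a factor $T-t_0$) and the diffusion term by the It\^o isometry. Using the Lipschitz bound \eqref{eq.38g} this yields an inequality of the form
\[
\mathbb{E}|X(t\wedge\tau_n)-\hat{X}(t\wedge\tau_n)|^2\le 2L(T-t_0+1)\int_{t_0}^t\mathbb{E}|X(s\wedge\tau_n)-\hat{X}(s\wedge\tau_n)|^2\,ds,
\]
to which Gronwall's inequality applies, forcing the left-hand side to vanish; letting $n\to\infty$ and using path continuity then gives indistinguishability.

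Next I would prove existence via the successive approximations $X^{(0)}(t)\equiv X_0$ and
\[
X^{(n+1)}(t)=X_0+\int_{t_0}^t\mu(X^{(n)}(s),s)\,ds+\int_{t_0}^t\sigma(X^{(n)}(s),s)\,dW(s).
\]
The linear growth condition \eqref{eq.38h}, combined with Doob's $L^2$ maximal inequality for the stochastic integral, gives by induction that every $X^{(n)}$ lies in $\mathcal{M}^2[t_0,T]$ with $\mathbb{E}\sup_{t_0\le s\le t}|X^{(n)}(s)|^2$ bounded uniformly in $n$. Then the same Cauchy--Schwarz/Doob/It\^o-isometry estimates together with \eqref{eq.38g} yield
\[
\mathbb{E}\sup_{t_0\le s\le t}|X^{(n+1)}(s)-X^{(n)}(s)|^2\le C\int_{t_0}^t\mathbb{E}\sup_{t_0\le r\le s}|X^{(n)}(r)-X^{(n-1)}(r)|^2\,ds,
\]
and iterating this recursion produces the summable bound $A\,(C(T-t_0))^n/n!$ for a constant $A$. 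Hence $(X^{(n)})$ is Cauchy in $\mathcal{M}^2[t_0,T]$ under the uniform norm; its limit $X$ is continuous, $\mathcal{F}_t$-adapted, belongs to $\mathcal{M}^2$, and taking limits in the recursion (once more via \eqref{eq.38g}) shows it satisfies \eqref{eq.4b}, with part $(ii)$ of Definition \ref{Def:2.1} following from \eqref{eq.38h}.

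Finally, for the moment estimate \eqref{eq.39a}, I would apply $(a+b+c)^2\le 3(a^2+b^2+c^2)$ to \eqref{eq.4b}, take the supremum over $[t_0,t]$ and then expectations, bound the drift integral by Cauchy--Schwarz (factor $t-t_0$) and the martingale term by Doob's maximal inequality followed by It\^o isometry (factor $4$), and invoke \eqref{eq.38h} to arrive, with $g(t):=\mathbb{E}\sup_{t_0\le s\le t}|X(s)|^2$, at
\[
g(t)\le 3\,\mathbb{E}|X_0|^2+3K(T-t_0+4)\int_{t_0}^t\bigl(1+g(s)\bigr)\,ds,
\]
after which applying Gronwall's inequality to $1+g$ lands exactly on $(1+3\mathbb{E}|X_0|^2)\exp(3K(T-t_0)(T-t_0+4))$. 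I expect the main obstacle to be purely one of bookkeeping: since the target bound controls $\sup_t|X(t)|^2$ inside the expectation, the argument must use Doob's maximal inequality rather than the bare It\^o isometry at every stage, and the constants $3$, $4$ and $T-t_0$ must be carried through each estimate so that the exponent emerges in precisely the stated form; the analytic content itself is standard and follows \cite{mao07}.
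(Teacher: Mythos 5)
Your proposal is correct and is precisely the standard Picard-iteration argument (Cauchy--Schwarz on the drift, It\^o isometry/Doob's $L^2$ maximal inequality on the stochastic integral, Gronwall for uniqueness, the factorial bound for convergence of the iterates), with the constants tracked so that the bound \eqref{eq.39a} comes out exactly as stated. The paper itself gives no proof of this lemma --- it is quoted from \cite{mao07} --- but your argument is the one in that reference, and the same machinery is what the paper redeploys in its proof of Theorem \ref{Thrm:3.1}.
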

\noindent Throughout the paper, we assume that the variable exponent function $p(\cdot)$ satisfies the following hypotheses.
\begin{itemize}
   \item [$(\mathbf{p_1})$] $p(\cdot): [0, \infty) \to \mathbb{R}$ is a differentiable function satisfying
   \begin{align}\label{eq.3}
     1/2 &\leq p^-:=\inf_{x\geq 0} p(x) \quad \text{and} \quad \sup_{x\geq 0} p(x):=p^+ \leq 1.
   \end{align}
   \item [$(\mathbf{p_2})$] There exists a real number $\delta$ such that
   \begin{align}\label{eq.4}
     \sup_{0<x<\delta} |p^{\prime}(x)|<\infty.
   \end{align}
\end{itemize}
In the rest of the paper, we let $f(x):=\kappa (\theta-x)$ and $g(x):=\xi x^{p(x)}$ for $x \in (0,\infty)$.\\

\noindent The following lemma makes sure that the equation \eqref{eq.1} is well-defined.
\begin{lemma}\label{Lem:2.3}
Assume that $\kappa, \theta, \xi, v_0 > 0$, and $(\mathbf{p_1})$-$(\mathbf{p_2})$ hold. Then the process $v(t)$ is strictly positive for $t \in [0,T]$.
\end{lemma}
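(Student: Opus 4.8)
The plan is to establish that the first time $v$ reaches $0$ exceeds the horizon $T$ almost surely. Fix a solution $v$ of \eqref{eq.2} on $[0,T]$ (supplied by Section~\ref{Sec3}; if one wishes to keep this lemma logically prior, the same estimate may be run along the Lipschitz-approximated/truncated processes of Section~\ref{Sec3} and passed to the limit). For $\varepsilon\in(0,1)$ put $\tau_\varepsilon:=\inf\{t\in[0,T]:v(t)\le\varepsilon\}$ with $\inf\emptyset:=+\infty$, and $\tau_0:=\lim_{\varepsilon\downarrow0}\tau_\varepsilon=\inf\{t\in[0,T]:v(t)=0\}$. By continuity of paths and $v_0>0$ a.s., strict positivity on $[0,T]$ amounts to $\mathbb{P}(\tau_0\le T)=0$, and since $\{\tau_0\le T\}\subseteq\{\tau_\varepsilon\le T\}$ it is enough to show $\mathbb{P}(\tau_\varepsilon\le T)\to0$ as $\varepsilon\downarrow0$. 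The only bookkeeping nuisance --- that $v_0$ may be arbitrarily small --- is handled by intersecting with $\{v_0>1/\ell\}$ throughout and letting $\ell\to\infty$ using $\mathbb{P}(v_0>0)=1$; I suppress this below.

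The core is a Dynkin-type inequality for the test function $\varphi(x):=-\ln x$, which is $C^2$ on $(0,\infty)$ and diverges to $+\infty$ at the origin. With generator $L\varphi(x)=f(x)\varphi'(x)+\tfrac12 g(x)^2\varphi''(x)=-\dfrac{\kappa\theta}{x}+\kappa+\dfrac{\xi^2}{2}\,x^{2p(x)-2}$, I would first use $(\mathbf{p_1})$ to bound $L\varphi$ from above on all of $(0,\infty)$ by a constant $C=C(\kappa,\theta,\xi,p^-)$: for $x\ge1$ one has $2p(x)-2\le0$, so $x^{2p(x)-2}\le1$; for $x\le1$ one has $2p(x)-2\ge2p^--2\ge-1$, so $x^{2p(x)-2}\le x^{-1}$, and this single potentially singular term is absorbed by the positive boundary drift $\kappa\theta/x>0$. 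Since $v(\cdot\wedge\tau_\varepsilon)$ stays in $[\varepsilon,\infty)$ where $\varphi$ is smooth, and the stochastic integral $\int_0^{\cdot\wedge\tau_\varepsilon}\xi\,v(s)^{p(v(s))-1}\,dW(s)$ has integrand bounded by $\xi\max\{1,\varepsilon^{p^--1}\}$ on $[0,\tau_\varepsilon]$ --- hence is a genuine martingale --- It\^o's formula and taking expectations give
\[
\mathbb{E}\big[\varphi(v(t\wedge\tau_\varepsilon))\big]\le\mathbb{E}[\varphi(v_0)]+\mathbb{E}\int_0^{t\wedge\tau_\varepsilon}L\varphi(v(s))\,ds\le\mathbb{E}[\varphi(v_0)]+CT .
\]
On $\{\tau_\varepsilon\le T\}$ we have $\varphi(v(\tau_\varepsilon))=|\ln\varepsilon|$, while on its complement $\varphi(v(T))\ge-v(T)$; subtracting $\mathbb{E}[v(T)]$ (finite by the $\mathcal{M}^2$-type moment bound of Lemma~\ref{Lem:2.1}/Section~\ref{Sec4}) yields the Markov-type estimate $|\ln\varepsilon|\,\mathbb{P}(\tau_\varepsilon\le T)\le\mathbb{E}[\varphi(v_0)]+CT+\mathbb{E}[v(T)]$, a finite quantity independent of $\varepsilon$. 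Letting $\varepsilon\downarrow0$ forces $\mathbb{P}(\tau_\varepsilon\le T)\to0$, hence $v>0$ on $[0,T]$ almost surely. Equivalently, this is Feller's test: the scale density $s'(x)=\exp\big(-\int_{x_0}^x\frac{2\kappa(\theta-u)}{\xi^2 u^{2p(u)}}\,du\big)$ has $\int_{0^+}s'=+\infty$ under $(\mathbf{p_1})$, so $0$ is an inaccessible boundary.

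The step I expect to be genuinely delicate is exactly this upper bound on $L\varphi$ near $x=0$ (equivalently $\int_{0^+}s'=+\infty$) in the \emph{critical} regime where the value $p^-=1/2$ is approached near the origin: there $x^{2p(x)-2}$ is truly of order $x^{-1}$, the estimate only yields $L\varphi(x)\le\kappa+x^{-1}\big(\tfrac{\xi^2}{2}-\kappa\theta\big)$, and this is useful precisely when the classical Feller balance $2\kappa\theta\ge\xi^2$ holds --- without it the origin is in fact reached, just as for the standard CIR model \eqref{eq.1a}. If $p^->1/2$ (or merely $p>1/2$ on a neighbourhood of $0$) the difficulty disappears, since then $x^{2p(x)-2}=o(x^{-1})$ and the drift dominates unconditionally. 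Hypothesis $(\mathbf{p_2})$ serves here only as a technical lubricant --- it makes $p$ Lipschitz near $0$, which is what legitimizes replacing $x^{2p(x)}$ by $x$ (resp.\ $x^{2p^-}$) in these near-origin comparisons and the associated Lamperti-type change of variables --- but it does not on its own settle the knife-edge case. Everything else (the localization in $\ell$, the martingale property on $[0,\tau_\varepsilon]$, the elementary bounds on $\varphi$ and on $L\varphi$ for $x\ge1$) is routine.
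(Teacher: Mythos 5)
Your route is genuinely different from the paper's. The paper invokes the local (differential) form of Feller's non-attainability test: it forms $\mathcal{T}_p(x)=f(x)-\tfrac12\frac{\partial g^2}{\partial x}(x)$ as in \eqref{eq.38c}--\eqref{eq.38d} and asserts in \eqref{eq.38e} that $\lim_{x\to0^+}\mathcal{T}_p(x)\ge\kappa\theta>0$. You instead run a Lyapunov/Dynkin argument with $\varphi(x)=-\ln x$, localize at $\tau_\varepsilon$, bound $L\varphi$ from above, and conclude $\mathbb{P}(\tau_\varepsilon\le T)\lesssim 1/|\ln\varepsilon|$. Both mechanisms hinge on the same quantity ($\kappa\theta/x$ versus $\tfrac{\xi^2}{2}x^{2p(x)-1}\cdot x^{-1}$ near the origin), but yours is self-contained and quantitative: it produces an explicit decay rate for the exit probability, needs only that the stopped stochastic integral is a true martingale (which you verify via the boundedness of $v^{p(v)-1}$ on $[\varepsilon,\infty)$ since $p-1\le 0$), and the localizations in $\ell$ and in $\mathbb{E}[-\ln v_0]$ are handled correctly. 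The paper's proof is shorter but rests on citing the Feller criterion as a black box.

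The more important point is the one you flag in your last paragraph, and you are right to flag it: under $(\mathbf{p_1})$ alone, $p$ may equal (or approach) $1/2$ near the origin, in which case $x^{2p(x)-1}$ does \emph{not} tend to $0$ and the limit in \eqref{eq.38e} is $\kappa\theta-\tfrac{\xi^2}{2}\lim_{x\to0^+}x^{2p(x)-1}p(x)$, not $\kappa\theta$. For $p\equiv 1/2$ the equation \emph{is} the CIR model, which hits zero with positive probability when $2\kappa\theta<\xi^2$; so the lemma as stated cannot be true for all $\kappa,\theta,\xi>0$ under $(\mathbf{p_1})$--$(\mathbf{p_2})$ alone. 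The paper's own Remark \ref{Rem:2.1} tacitly concedes this. Consequently the unresolved "knife-edge" in your write-up is not a defect of your argument but of the statement: your proof is complete whenever $\liminf_{x\to0^+}(p(x)-\tfrac12)>0$ (or, more generally, $x^{2p(x)-1}\to0$ as $x\to0^+$), and in the remaining critical regime one must additionally impose $2\kappa\theta\ge\xi^2$ exactly as for CIR. If you present this proof, state that dichotomy explicitly rather than leaving it as a caveat; as written, the final inequality $\mathbb{E}[\varphi(v(t\wedge\tau_\varepsilon))]\le\mathbb{E}[\varphi(v_0)]+CT$ is only justified once one of those two hypotheses is in force. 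One small secondary remark: your aside that $(\mathbf{p_2})$ is merely "technical lubricant" is consistent with the paper, where the bound on $|p'|$ near $0$ is used only to kill the term $|p'(x)||\log x|\,x^{2p(x)}$ in \eqref{eq.38d}; in your formulation that term never appears because you differentiate $\varphi$ rather than $g^2$, so you genuinely do not need $(\mathbf{p_2})$ for this lemma.
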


\begin{proof}
We use the following Feller’s test \cite{feller52,feller51}, which is also know as the local (differential) form of Feller’s non-attainability test, to analyze the behavior of the diffusion processes $v(t)$ at the boundary (i.e. $v(t)=0$) of its state space $(0,\infty)$.\\
Consider equation \eqref{eq.4a}. If the condition
\begin{equation}\label{eq.38b}
\lim_{x \to 0}\left(\mu(x)-\frac{1}{2}\frac{\partial \sigma^2}{\partial x}(x) \right)\geq 0
\end{equation}
holds, then the boundary $X(t)=0$ is non-attainable for the process $X(t)$ if  $X_0>0$.\\
Now, we apply \eqref{eq.38b} to the diffusion process \eqref{eq.1}.\\
First define
\begin{equation}\label{eq.38c}
\mathcal{T}_p(x)=f(x)-\frac{1}{2}\frac{\partial g^2}{\partial x}(x).
\end{equation}
Then
\begin{align}\label{eq.38d}
\mathcal{T}_{p}(x)&=\kappa(\theta - x)-\xi^2 x^{2p(x)}\left(p^{\prime}(x)\log(x)+\frac{p(x)}{x}\right)\\
& \geq \kappa(\theta - x)-\xi^2 \left(|p^{\prime}(x)||\log(x)|x^{2p(x)}+x^{2p(x)-1}p(x)\right),\nonumber
\end{align}
for $0<x<\delta$. Now applying the elementary limit theorems and the assumptions we have, it reads
\begin{align}\label{eq.38e}
\lim_{x \to 0^+}\mathcal{T}_{p}(x)& \geq \lim_{x \to 0^+} \left(\kappa(\theta - x)-\xi^2 \left(|p^{\prime}(x)||\log(x)|x^{2p(x)}+x^{2p(x)-1}p(x)\right)\right)\nonumber\\
&=\kappa \theta >0,
\end{align}
as desired. In conclusion, the diffusion process $v(t)$ started in $(0,\infty)$ never hits zero almost surely (a.s.), and hence, the equation \eqref{eq.1} is well-defined.
\end{proof}

\begin{remark}\label{Rem:2.1}
If we let $p(x)=1/2$ in \eqref{eq.1}, then
\begin{align}\label{eq.38f}
\mathcal{T}_{1/2}(x)=\kappa(\theta - x)-\xi^2 x\left(\frac{1}{2x}\right) \Rightarrow \lim_{x \to 0^+}\mathcal{T}_{1/2}(x) \geq \kappa \theta-\frac{\xi^2}{2} \geq 0,
\end{align}
which gives the condition  $2 \kappa \theta\geq \xi^2$, i.e. the well-know Feller's (zero) non-attainability condition for the CIR model.
\end{remark}

\section{Existence and uniqueness results} \label{Sec3}
The following theorem is the main result of the present paper.

\begin{theorem}\label{Thrm:3.1}
Assume that conditions $(\mathbf{p_1})$-$(\mathbf{p_2})$ hold. Suppose that the linear growth condition $(ii)$ of Lemma \ref{Lem:2.1} holds; however, the Lipschitz condition is replaced with the following local Lipschitz condition:\\
For every integer $n \geq 1$, there exist positive constants $\hat{L}_n$ such that for all $t \in [0,T]$ and for all $ x,y \in \mathbb{R}$ with $|x|,\, |y| \in [\frac{1}{n},n]$ it holds:
\begin{equation}\label{eq.3.7}
|f(x)-f(y)|^2 \vee |g(x)-g(y)|^2  \leq \hat{L}_n\,|x-y|^2.
\end{equation}
Let $v(0) = v_0>0$ be a random variable independent of $\mathcal{F}_t$ such that $\mathbb{E}|v_0|^2<\infty$. Then there exists a unique solution $v(t)$ of \eqref{eq.1} in $\mathcal{M}^{2}[0,T]$.\\
\end{theorem}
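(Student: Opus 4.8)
The plan is to follow the classical truncation-plus-Picard-iteration route used to prove existence and uniqueness under local Lipschitz and global linear growth hypotheses (as in Mao's monograph), adapting it to our coefficients $f(x)=\kappa(\theta-x)$ and $g(x)=\xi x^{p(x)}$. First I would truncate the coefficients: for each integer $n\geq 1$ define $f_n(x)=f\bigl(\pi_n(x)\bigr)$ and $g_n(x)=g\bigl(\pi_n(x)\bigr)$, where $\pi_n$ is the radial projection onto $[-n,n]$ (equivalently one caps $|x|$ at $n$). On all of $\mathbb{R}$ the truncated coefficients are globally Lipschitz: for $|x|,|y|\leq n$ this is exactly the local Lipschitz estimate \eqref{eq.3.7}, and for arguments outside the ball the projection is $1$-Lipschitz and does not increase distances, so \eqref{eq.3.7} still applies after projecting. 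The truncated coefficients also inherit the global linear growth bound $(ii)$ of Lemma \ref{Lem:2.1}, since $\pi_n$ is a contraction fixing $0$, hence $|f_n(x)|^2\vee|g_n(x)|^2\leq K(1+|\pi_n(x)|^2)\leq K(1+|x|^2)$. Therefore Lemma \ref{Lem:2.1} yields, for every $n$, a unique solution $v_n\in\mathcal{M}^2[0,T]$ of the stochastic differential equation with coefficients $f_n,g_n$ and initial datum $v_0$, together with the a priori bound \eqref{eq.39a}.

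Next I would patch these together via stopping times. Set $\tau_n=\inf\{t\in[0,T]:\, |v_n(t)|\geq n\}\wedge T$. On the stochastic interval $[0,\tau_n]$ the process $v_n$ takes values in $[-n,n]$, so $f_n(v_n(s))=f(v_n(s))$ and $g_n(v_n(s))=g(v_n(s))$ there, and the same computation applied to $v_{n+1}$ shows that $v_n$ and $v_{n+1}$ solve the \emph{same} globally Lipschitz equation (the one with coefficients $f_{n+1},g_{n+1}$) up to time $\tau_n$; by the uniqueness half of Lemma \ref{Lem:2.1} they agree on $[0,\tau_n]$, whence $\tau_n\leq\tau_{n+1}$ a.s. and the sequence $(v_n)$ is consistent. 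Let $\tau_\infty=\lim_n\tau_n$ and define $v(t)=v_n(t)$ for $t\leq\tau_n$; this is a well-defined continuous $\mathcal{F}_t$-adapted process on $[0,\tau_\infty)$ solving \eqref{eq.2}. To conclude I must show $\tau_\infty=T$ a.s.; this is where the global linear growth condition does its work. On $\{\tau_n<T\}$ one has $\sup_{t\leq\tau_n}|v_n(t)|\geq n$, so by Chebyshev and \eqref{eq.39a}, $\mathbb{P}(\tau_n<T)\leq n^{-2}\,\mathbb{E}\sup_{t\leq T}|v_n(t)|^2\leq C/n^2$ with $C=(1+3\mathbb{E}|v_0|^2)\exp(3KT(T+4))$ independent of $n$; letting $n\to\infty$ gives $\mathbb{P}(\tau_\infty<T)=0$. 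Then $v$ is defined on all of $[0,T]$, $v\in\mathcal{M}^2[0,T]$ by Fatou applied to \eqref{eq.39a}, and it solves \eqref{eq.1}. For global uniqueness, if $\hat v$ is another solution in $\mathcal{M}^2[0,T]$, introduce its own exit times $\hat\tau_n$ and compare $v$ and $\hat v$ on $[0,\tau_n\wedge\hat\tau_n]$, where both satisfy the globally Lipschitz truncated equation; Lemma \ref{Lem:2.1}'s uniqueness forces them to coincide there, and since $\tau_n\wedge\hat\tau_n\uparrow T$ a.s. (again by the linear-growth a priori bound applied to $\hat v$, or directly from continuity), indistinguishability on $[0,T]$ follows.

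The one genuinely delicate point—beyond the otherwise routine bookkeeping—is verifying that the truncation is compatible with the hypotheses actually at hand. The linear growth bound $(ii)$ is postulated on all of $\mathbb{R}$, so it transfers to $f_n,g_n$ immediately; but the Lipschitz estimate \eqref{eq.3.7} is only assumed for $|x|,|y|\in[\tfrac1n,n]$, whereas after projecting onto $[-n,n]$ one may land on arguments with modulus below $\tfrac1n$ or equal to $0$, where $g(x)=\xi x^{p(x)}$ need not even be defined for $x\le 0$. I would handle this by working on the state space $(0,\infty)$ rather than $\mathbb{R}$: by Lemma \ref{Lem:2.3} the relevant solutions are strictly positive, so the natural truncation is $\pi_n(x)=\min\{\max\{x,\tfrac1n\},n\}$ onto $[\tfrac1n,n]$, on which \eqref{eq.3.7} applies verbatim and which is still a $1$-Lipschitz map fixing every point of $[\tfrac1n,n]$; the linear growth bound is preserved since $|\pi_n(x)|\le\max\{|x|,1\}$ gives $|f_n(x)|^2\vee|g_n(x)|^2\le K(1+\max\{|x|^2,1\})\le 2K(1+|x|^2)$. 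With this choice the argument above goes through unchanged, the exit times being defined via $\tau_n=\inf\{t:\, v_n(t)\notin[\tfrac1n,n]\}\wedge T$ and the strict positivity from Lemma \ref{Lem:2.3} guaranteeing $\tau_n\uparrow T$ together with the linear-growth tail bound.
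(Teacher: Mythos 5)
Your proposal is correct and follows essentially the same route as the paper: truncate the coefficients onto $[\tfrac1n,n]$ to obtain globally Lipschitz approximations, invoke Lemma \ref{Lem:2.1} for the truncated equations, patch the solutions together with the exit times $\tau_n$, control the upper exit via Chebyshev and the a priori bound \eqref{eq.39a}, and rule out the lower exit via the positivity/Feller argument of Lemma \ref{Lem:2.3}. The only substantive difference is that you skip the explicit Picard-iteration construction of the truncated solution, which is harmless since Lemma \ref{Lem:2.1} already supplies it.
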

\begin{proof} We first start with a truncation procedure.\\
For each integer $n \ge 1$, we choose a small positive parameter $\varepsilon=\varepsilon(n)$ such that $\varepsilon< 1/n^2$. Next, we define a piecewise-function $\theta_n(\cdot): [0, \infty) \to \mathbb{R}$  by
\begin{equation}\label{eq.5}
\theta_n(r) = \begin{cases} \frac{1}{n}, & 0 \le r \le \frac{1}{n} \\ r, & \frac{1}{n} + \varepsilon \le r \le n - \varepsilon \\ n, & r \ge n \end{cases}
\end{equation}
We also define the smooth radial truncation $\rho_n^{\varepsilon}(x)$ for $x \in \mathbb{R}$ by
\begin{equation} \label{eq.6}
\rho_n^{\varepsilon}(x) = \begin{cases} \theta_n(|x|)\operatorname{sgn}(x), & x \neq 0 \\ 0, & x = 0 \end{cases}
\end{equation}
Defined this way, it holds $|\rho_n^{\varepsilon}(x)|=|\theta_n(|x|) \operatorname{sgn}(x)|=\theta_n(|x|)\leq n$ for $x \in \mathbb{R}$. Additionally, $\rho_n^\varepsilon(\cdot)$ is Lipschitz. Indeed, for any $x,y \in \mathbb{R}\setminus\{0\}$, we have
\begin{align} \label{eq.7}
\rho_n^\varepsilon(x)-\rho_n^\varepsilon(y) &= \theta_n(|x|) \operatorname{sgn}(x)-\theta_n(|y|) \operatorname{sgn}(y) \nonumber \\
& =(x-y) \frac{\theta_n(|x|)}{|x|}+y\left(\frac{\theta_n(|x|)}{|x|}-\frac{\theta_n(|y|) }{|y|}\right).
\end{align}
\noindent Let $\varphi_n(r):=\frac{\theta_n(|r|)}{|r|}$, $r \neq 0$. Then $\varphi_n(\cdot)$ is smooth on $(0,\infty)$ with a bounded derivative $\|\varphi_n'\|_\infty < \infty$. Thus,
using the mean value theorem, it reads
\begin{equation} \label{eq.8}
|\rho_n^\varepsilon(x)-\rho_n^\varepsilon(y)| \leq \|\varphi_n\|_\infty  \,\, |x-y| + \|\varphi'_n\|_\infty \,\, ||x|-|y|| \leq L_n |x - y|,
\end{equation}
with the Lipschitz constant $L_n=(1+n\|\varphi'_n\|_\infty)$.\\
Next, we define the truncated drift and diffusion terms to replace $\kappa (\theta-x)$ and $\xi x^{p(x)}$ of \eqref{eq.2}, respectively, as follows
\begin{equation}\label{eq.9}
f_n(x) = \kappa (\theta-\rho_n^\varepsilon(x)),
\end{equation}
and
\begin{equation}\label{eq.10}
g_n(x) = \xi (\rho_n^\varepsilon(x))^{p(\rho_n^\varepsilon(x))},
\end{equation}
yielding the following truncated form of \eqref{eq.2}
\begin{equation} \label{eq.11}
v_{n}(t) = v_0 + \int_0^t f_n(v_{n}(s)) ds + \int_0^t g_n(v_{n}(s)) dW(s),  \quad v_0>0,\,  t \in [0,T].
\end{equation}
Then clearly $f_n(\cdot)$ and $g_n(\cdot)$ satisfy the linear growth condition $(ii)$ of Lemma \ref{Lem:2.1}.\\
We proceed with showing that $f_n(\cdot)$ and $g_n(\cdot)$ are Lipschitz.\\
For $x,y \in \mathbb{R}$, we have
\begin{align}\label{eq.12}
|f_n(x)-f_n(y)| &= |\kappa (\theta-\rho_n^\varepsilon(x))-\kappa (\theta - \rho_n^\varepsilon(y))|\leq \kappa\,\,|\rho_n^\varepsilon(x)-\rho_n^\varepsilon(y)| \nonumber \\
& \leq  L^f_n\,\,|x-y|, \quad  L^f_n= \kappa L_n.
\end{align}
First we show that $z \mapsto z^{p(z)}$ is Lipschitz on $[1/n, n]$.\\
For $x \neq 0$, $|\rho_n^{\varepsilon}(x)| \in [1/n, n]$. Thus, on the compact interval $[1/n, n]$, the function $z \mapsto z^{p(z)}$ is continuously differentiable with a bounded derivative. Indeed, considering the assumption $(\mathbf{p_1})$-$(\mathbf{p_2})$, we obtain
\begin{align}\label{eq.12b}
\frac{dz^{p(z)}}{dz} & = z^{p(z)-1}p(z) + z^{p(z)} p'(z) \log z \leq n^{p^+} \left(np^+ +\|p'\|_\infty \log n\right)\leq C_n .
\end{align}
Thus, using the mean value theorem, it reads
\begin{align} \label{eq.13}
|g_n(x)-g_n(y)| &= \xi\,\,|(\rho_n^\varepsilon(x))^{p(\rho_n^\varepsilon(x))}-(\rho_n^\varepsilon(y))^{p(\rho_n^\varepsilon(y))}| \leq C_n \xi \,\, |\rho_n^\varepsilon(x)-\rho_n^\varepsilon(y)| \nonumber \\
& \leq L^g_n\,\, |x-y|, \quad  L^g_n= \xi L_n C_n.
\end{align}
Finally, letting $\hat{L}_n = \max\{ (L^f_n)^2, (L^g_n)^2\}$ makes $f_n(\cdot)$ and $g_n(\cdot)$ Lipschitz with constant $\hat{L}_n$; that is, for $x,y \in \mathbb{R}$,
\begin{align}\label{eq.14}
|f_n(x)-f_n(y)|^2 \vee |g_n(x)-g_n(y)|^2 & \leq \hat{L}_n\,|x-y|^2.
\end{align}
Since $f_n(\cdot)$ and $g_n(\cdot)$ satisfy both the Lipschitz  and the linear growth conditions, we can apply Lemma \ref{Lem:2.1}; that is, for each integer $n \geq 1$, there exists a unique solution $v_n(t)$ in $\mathcal{M}^{2}[0,T]$ to the truncated equation \eqref{eq.11}.\\
Notice that using the H\"{o}lder inequality and It\^{o}'s isometry show that $f_n(\cdot)$ and $g_n(\cdot)$ belong to $\mathcal{M}^{2}[0,T]$ since
\begin{equation}\label{eq.17a}
\mathbb{E}\bigg|\int_{0}^{t}f_n(v_{n}(s))ds\bigg|^{2} \leq t\mathbb{E}\int_{0}^{t}|f_n(v_{n}(s))|^{2}ds\leq T K \mathbb{E}\int_{0}^{t}(1+|v_{n}(s)|^{2})ds <\infty,
\end{equation}
and
\begin{equation}\label{eq.17b}
\mathbb{E}\bigg|\int_{0}^{t}g_n(v_{n}(s))dW(s)\bigg|^{2} \leq \mathbb{E}\int_{0}^{t}|g_n(v_{n}(s))|^{2}ds\leq K \mathbb{E}\int_{0}^{t} (1+|v_{n}(s)|^{2})ds <\infty,
\end{equation}
for, $v_n(t)$ in $\mathcal{M}^{2}[0,T]$, $n\geq 1$ and $t \in [0,T]$.\\
In the sequel, to construct a solution to the original equation \eqref{eq.2}, we  mimic the deterministic situation and construct recursively a sequence of successive approximations (the Picard iterations see, e.g., \cite{friedman75,karatzasshreve91,mao07}) by setting
\begin{equation} \label{eq.17c}
v^{(0)}_n(t)=v_0,
\end{equation}
and
\begin{equation} \label{eq.18}
v^{(k+1)}_{n}(t) = v_0 + \int_0^t f_n(v^{(k)}_{n}(s)) ds + \int_0^t g_n(v^{(k)}_{n}(s)) dW(s),\,\, k\geq 0,\,\,  t \in [0,T].
\end{equation}
Then, we show that the sequence $\{v^{(k)}_{n}(t)\}_{k \geq 0}$ will converge to a solution of the original equation \eqref{eq.2}.\\
First, we start with the uniform $2$nd moment estimate of $v_n(t)$ in $k\geq 0$ to make sure that each iterate remains bounded
in $L^2$-norm; that is, we show that
\begin{equation}\label{eq.19}
\sup_{k\geq 0}\left(\mathbb{E}\sup_{0 \leq t \leq T}|v^{(k)}_{n}(t)|^{2} \right)\leq (1+ 3\mathbb{E}|v_0|^2)\exp(3 K((T^2+4T)).
\end{equation}
\noindent Fix a truncation level $n\geq1$. Applying the H\"{o}lder inequality, and the Burkholder-Davis-Gundy (BDG) inequality together yields
\begin{align} \label{eq.21}
\mathbb{E}\sup_{0\leq s\leq t}|v^{(k+1)}_{n}(s)|^{2}& \leq  3 \mathbb{E}|v_0|^2 + 3\mathbb{E}\bigg|\int_0^t f_n(v^{(k)}_{n}(s)) ds\bigg|^2 \nonumber \\
&+ 3\mathbb{E}\sup_{0 \leq s \leq t}\bigg|\int_0^s g_n(v^{(k)}_{n}(u)) dW(u)\bigg|^2\nonumber \\
&\leq 3 \mathbb{E}|v_0|^2 + 3TK\int_0^t (1+\mathbb{E}|v^{(k)}_{n}(u)|^2) du \nonumber \\
& + 12 \mathbb{E} \int_0^t |g_n(v^{(k)}_{n}(u))|^2 dW(u)\nonumber \\
& \leq 3 \mathbb{E}|v_0|^2 + 3 K(T+4) \int_0^t (1+\mathbb{E}|v^{(k)}_{n}(u)|^2) du.
\end{align}
Consequently,
\begin{align} \label{eq.22}
1+\mathbb{E}\sup_{0\leq s\leq t}|v^{(k+1)}_{n}(s)|^{2} & \leq 1+ 3\mathbb{E}|v_0|^2\nonumber \\
& + 3 K(T+4) \int_0^t \left(1+\mathbb{E}\sup_{0\leq w\leq u}|v^{(k)}_{n}(w)|^{2} \right) du.
\end{align}
Set
\begin{equation} \label{eq.23}
M_{k}(t)=\mathbb{E}\sup_{0 \leq s \leq t}|v^{(k)}_{n}(s)|^{2}, \quad  t \in [0,T].
\end{equation}
Given a family of real‐valued functions $\{M_{k}(t)\}_{k\geq 0}$ on $[0,T]$, define the function $A(t):[0,T] \to \mathbb{R}$ by
\begin{equation} \label{eq.24}
A(t)=\sup_{k\geq 0}M_{k}(t).
\end{equation}
Thus,
\begin{align} \label{eq.25}
1+M_{k+1}(t)& \leq 1+ 3\mathbb{E}|v_0|^2 + 3 K(T+4) \int_0^t \left(1+A(u)\right) du, \quad \forall k.
\end{align}
Taking the supremum over $k$ gives
\begin{align} \label{eq.26}
1+A(t)& \leq 1+ 3\mathbb{E}|v_0|^2 + 3 K(T+4) \int_0^t \left(1+A(u)\right) du, \quad \forall k.
\end{align}
Now, applying the Gronwall inequality yields
\begin{align} \label{eq.27}
A(t)& \leq (1+ 3\mathbb{E}|v_0|^2)\exp(3 K((T^2+4T)).
\end{align}
Finally, using \eqref{eq.23} and \eqref{eq.24} provides the uniform bound in $k$
\begin{align*}
\sup_{k\geq 0}\left(\mathbb{E}\sup_{0 \leq t \leq T}|v^{(k)}_{n}(t)|^{2} \right)\leq (1+ 3\mathbb{E}|v_0|^2)\exp(3 K((T^2+4T)),
\end{align*}
which is the desired result.\\
Now, we make the inductive assumption that $v^{(k)}_{n}(t) \in \mathcal{M}^{2}[0,T]$ and it holds
\begin{align} \label{eq.18a}
\mathbb{E}|v^{(m+1)}_{n}(t)-v^{(m)}_{n}(t)|^2 & \leq \frac{(\hat{M}t)^{m+1}}{(m+1)!},\quad 0\leq m \leq k-1,\,\,  t \in [0,T].
\end{align}
where $\hat{M}>0$ is a constant depending on $K,\hat{L}_n$ and $T$.\\
Notice first that since $v_0$ is $\mathcal{F}_0$-measurable, $v^{(k+1)}_{n}(t)$ is well-defined if $k=0$. Then
\begin{align} \label{eq.29}
|v^{(1)}_{n}(t)-v^{(0)}_{n}(t)|^2=|v^{(1)}_{n}(t)-v_{0}|^2 & \leq 2\bigg|\int_0^t f_n(v^{(0)}_{n}(s)) ds\bigg|^2
+2\bigg|\int_0^t g_n(v^{(0)}_{n}(s))dW(s)\bigg|^2.
\end{align}
Taking expectation and applying the H\"{o}lder inequality, and It\^{o}'s isometry it reads
\begin{align} \label{eq.30}
\mathbb{E}|v^{(1)}_{n}(t)-v_{0}|^2 & \leq 2 t \mathbb{E}\int_0^t |f_n(v^{(0)}_{n}(s))|^2 ds +2 \mathbb{E}\int_0^t |g_n(v^{(0)}_{n}(s))|^2 ds \nonumber \\
& \leq 2t K \mathbb{E}\int_0^t (1+|v_0|^{2}) ds +2 K \mathbb{E}\int_0^t (1+|v_0|^{2}) ds \nonumber \\
& \leq 2t K \int_0^t (1+\mathbb{E}|v_0|^{2}) ds +2 K \int_0^t (1+\mathbb{E}|v_0|^{2}) ds \nonumber \\
& \leq 2t^2 K  (1+\mathbb{E}|v_0|^{2})+2t K (1+\mathbb{E}|v_0|^{2}) \nonumber \\
& \leq M_1t, \nonumber \\
\end{align}
which implies that $v^{(1)}_{n}(t) \in \mathcal{M}^{2}[0,T]$, where $M_1=2 K (T+1)(1+\mathbb{E}|v_0|^{2})$. From \eqref{eq.30}, it is easy to see that \eqref{eq.18a} holds for $k=0$. Now, assume that \eqref{eq.18a} holds for some $k\geq 0$. Then taking expectation and using \eqref{eq.14}, it reads
\begin{align} \label{eq.31}
\mathbb{E}|v^{(k+1)}_{n}(t)-v^{(k)}_{n}(t)|^2 & \leq 2\bigg|\int_0^t (f_n(v^{(k)}_{n}(s))-f_n(v^{(k-1)}_{n}(s))) ds\bigg|^2 \nonumber \\
& +2\bigg|\int_0^t (g_n(v^{(k)}_{n}(s))-g_n(v^{(k-1)}_{n}(s)))dW(s)\bigg|^2 \nonumber \\
& \leq 2 t\hat{L}_n \mathbb{E}\int_0^t |v^{(k)}_{n}(s)-v^{(k-1)}_{n}(s)|^2 ds\nonumber \\
& +2\hat{L}_n \mathbb{E}\int_0^t |v^{(k)}_{n}(s)-v^{(k-1)}_{n}(s)|^2 ds,
\end{align}
which gives
\begin{align} \label{eq.32}
\mathbb{E}|v^{(k+1)}_{n}(t)-v^{(k)}_{n}(t)|^2 & \leq M_2  \int_0^t \mathbb{E} |v^{(k)}_{n}(s)-v^{(k-1)}_{n}(s)|^2 ds,
\end{align}
where $M_2= 2\hat{L}_n(T+1)$. If we let $m=k-1$ in \eqref{eq.18a} and substitute into the right-hand side of \eqref{eq.32}, it yields
\begin{align} \label{eq.33}
\mathbb{E}|v^{(k+1)}_{n}(t)-v^{(k)}_{n}(t)|^2 & \leq \hat{M} \int_0^t \frac{(\hat{M}s)^{k}}{(k)!}ds=\frac{(\hat{M}t)^{k+1}}{(k+1)!},\quad \hat{M}=\max\{M_1,M_2\}.
\end{align}
Therefore \eqref{eq.18a} holds for $m=k$, which means that $v^{(k+1)}_{n}(t) \in \mathcal{M}^{2}[0,T]$.\\
Additionally,
\begin{align} \label{eq.34}
\sup_{0\leq t \leq T}|v^{(k+1)}_{n}(t)-v^{(k)}_{n}(t)|^2 & \leq 2 T \hat{L}_n \int_0^T |v^{(k)}_{n}(s)-v^{(k-1)}_{n}(s)|^2 ds\nonumber \\
& +2\sup_{0\leq t \leq T}\bigg|\int_0^T (g_n(v^{(k)}_{n}(s))-g_n(v^{(k-1)}_{n}(s)))dW(s)\bigg|^2.
\end{align}
\noindent Then taking expectation and using the BDG inequality along with \eqref{eq.18a}, it reads
\begin{align} \label{eq.35}
\mathbb{E}\sup_{0\leq t \leq T}|v^{(k+1)}_{n}(t)-v^{(k)}_{n}(t)|^2 & \leq 2 T \hat{L}_n \int_0^T \mathbb{E}|v^{(k)}_{n}(s)-v^{(k-1)}_{n}(s)|^2 ds\nonumber \\
& +8\hat{L}_n\int_0^T \mathbb{E}|v^{(k)}_{n}(s)-v^{(k-1)}_{n}(s)|^2 ds \nonumber \\
& \leq 2 T \hat{L}_n \int_0^T \frac{(\hat{M}s)^{k}}{(k)!}ds + 8\hat{L}_n\int_0^T \frac{(\hat{M}s)^{k}}{(k)!}ds\nonumber \\
&\leq M_3 \frac{(\hat{M}T)^{k}}{(k)!},
\end{align}
where $M_3= 2\hat{L}_nT(T+4)$. Then, using the Markov inequality it reads
\begin{align} \label{eq.36}
\mathbb{P}\left\{\omega;\, \sup_{0\leq t \leq T}|v^{(k+1)}_{n}(t)-v^{(k)}_{n}(t)|\geq \frac{1}{2^k} \right\}&\leq 2^{2k}\mathbb{E}\sup_{0\leq t \leq T}|v^{(k+1)}_{n}(t)-v^{(k)}_{n}(t)|^2 \nonumber \\
&\leq  M_3 \frac{(4\hat{M}T)^{k}}{(k)!}.
\end{align}
Since $\sum^{\infty}_{k=0}M_3 \frac{(4\hat{M}T)^{k}}{(k)!}<\infty$, using the Borel-Cantelli lemma we obtain
\begin{align} \label{eq.37}
\mathbb{P}\left\{\limsup_{k \to \infty}\left\{\omega;\,\sup_{0\leq t \leq T}|v^{(k+1)}_{n}(t)-v^{(k)}_{n}(t)|\geq \frac{1}{2^k}\right\} \right\}&=0.
\end{align}
Therefore, for almost every (a.e.) $\omega \in \Omega$ there exists a positive integer $k_0(\omega)$ such that
\begin{align} \label{eq.38}
\sup_{0\leq t \leq T}|v^{(k+1)}_{n}(t)-v^{(k)}_{n}(t)|\leq \frac{1}{2^k}\,\,\text{ if }\,\, k\geq k_0(\omega).
\end{align}
Thus, the partial sums
\begin{align} \label{eq.39}
v_0+\sum_{m=0}^{k-1}(v^{(m+1)}_{n}(t)-v^{(m)}_{n}(t))=v^{(k)}_{n}(t)
\end{align}
are convergent uniformly in $t \in [0,T]$. Let's denote this limit by $v_{n}(t)$. Then $v_{n}(t)$ is a continuous and $\mathcal{F}_t$-adapted process. Furthermore, by \eqref{eq.18a}, $\{v^{(k)}_{n}(t)\}_{k \geq 0}$, $t \geq 0$, is a Cauchy sequence in $L^2([0, T] \times \Omega)$, and hence, we have the convergence $v^{(k)}_{n}(t) \to v_{n}(t)$ in $L^2([0, T] \times \Omega)$ as $k \to \infty$.
Further, using Fatou's lemma in \eqref{eq.19} for $k \to \infty$ yields
\begin{equation}\label{eq.40}
\mathbb{E}|v_{n}(t)|^{2}\leq (1+ 3\mathbb{E}|v_0|^2)\exp(3 K((T^2+4T)),
\end{equation}
which shows that $v_{n}(t) \in \mathcal{M}^{2}[0,T]$. Considering that $f_n(\cdot)$, $g_n(\cdot)$ are Lipschitz and uniformly bounded in $k$, and applying the H\"{o}lder inequality and It\^{o}'s isometry, it yields
\begin{align} \label{eq.41}
\int_0^t f_n(v^{(k)}_{n}(s))ds \xrightarrow{L^2} \int_0^t f_n(v_{n}(s))ds,
\end{align}
and
\begin{align} \label{eq.42}
\int_0^t g_n(v^{(k)}_{n}(s))dW(s) \xrightarrow{L^2} \int_0^t g_n(v_{n}(s))dW(s),
\end{align}
as  $k \to \infty$. Hence, if we take $k \to \infty$ in \eqref{eq.18} we conclude that
\begin{equation} \label{eq.43}
v_{n}(t) = v_0 + \int_0^t f_n(v_{n}(s)) ds + \int_0^t g_n(v_{n}(s)) dW(s),\,\,  t \in [0,T].
\end{equation}
Thus, $v_{n}(t)$ is a solution to \eqref{eq.11}. Next, we shall show the uniqueness. Let assume that $v_{n}(t)$ and $\hat{v}_{n}(t)$ be to solutions of \eqref{eq.11} in $\mathcal{M}^{2}[0,T]$. Let us define the stopping times
\begin{equation} \label{eq.44}
\tau_n=\inf\{t\geq0;\, v_{n}(t) \notin [1/n,n]\},
\end{equation}
and
\begin{equation} \label{eq.45}
\hat{\tau}_n=\inf\{t\geq0;\, \hat{v}_{n}(t) \notin [1/n,n]\},
\end{equation}
and set $s_n=\tau_n \cap \hat{\tau}_n$. Then $\lim_{n \to \infty}s_n=+\infty$ a.s. since the solutions will not explode in finite time. Then
\begin{align} \label{eq.46}
v_{n}(t\wedge s_n) - \hat{v}_{n}(t\wedge s_n) &= \int_0^{t\wedge s_n} (f_n(v_{n}(u))-f_n(\hat{v}_{n}(u))) du \nonumber \\
& + \int_0^{t\wedge s_n} (g_n(v_{n}(u))-g_n(\hat{v}_{n}(u))) dW(u).
\end{align}
Then taking expectation, applying the H\"{o}lder inequality, It\^{o}'s isometry and using \eqref{eq.14}, it reads
\begin{align} \label{eq.47}
\mathbb{E}|v_{n}(t\wedge s_n) - \hat{v}_{n}(t\wedge s_n)|^2 & \leq 2\bigg|\int_0^{t\wedge s_n} (f_n(v_{n}(u))-f_n(\hat{v}_{n}(u))) du\bigg|^2 \nonumber \\
& +2\bigg|\int_0^{t\wedge s_n} (g_n(v_{n}(u))-g_n(\hat{v}_{n}(u))) dW(u)\bigg|^2 \nonumber \\
& \leq 2 t\hat{L}_n \mathbb{E}\int_0^{t\wedge s_n} |v_{n}(u)-\hat{v}_{n}(u)|^2 du\nonumber \\
& +2\hat{L}_n \mathbb{E}\int_0^{t\wedge s_n} |v_{n}(u)-\hat{v}_{n}(u)|^2 du \nonumber \\
& \leq 2\hat{L}_n(T+1) \int_0^{t} \mathbb{E}|v_{n}(u\wedge s_n)-\hat{v}_{n}(u\wedge s_n)|^2 du.
\end{align}
Now, applying the Gronwall inequality yields $\mathbb{E}|v_{n}(t\wedge s_n)-\hat{v}_{n}(t\wedge s_n)|^2=0$, and hence, $v_{n}(t\wedge s_n)=\hat{v}_{n}(t\wedge s_n)$ a.s. for each $t \geq 0$. Since $v_{n}(t)$ and $\hat{v}_{n}(t)$ are solutions of \eqref{eq.11}, they are continuous. Thus, the processes $\{v_{n}(t\wedge s_n);\, 0\leq t <\infty \}$ and $\{\hat{v}_{n}(t\wedge s_n);\, 0\leq t <\infty \}$ are indistinguishable, i.e. their paths coincide a.s. if they start from the same initial point, $v_0$. Further, since $\lim_{n \to \infty}s_n=+\infty$ a.s. for any fixed $t$, $t\wedge s_n \to t$ as $n \to \infty$. Therefore, $v_{n}(t\wedge s_n)\to v_{n}(t)$ and $\hat{v}_{n}(t\wedge s_n) \to \hat{v}_{n}(t)$, and the indistinguishability extends to $\{v_{n}(t);\, 0\leq t <\infty \}$ and $\{\hat{v}_{n}(t);\, 0\leq t <\infty \}$.\\

\noindent Lastly, we  show that the solution $v_{n}(t)$ of the truncated equation  \eqref{eq.11} is defined for all $t \in [0,T]$; that is, we construct a global solution to \eqref{eq.1}.\\
For the equation \eqref{eq.11} define the stopping time $\tau_n$ by
\begin{equation} \label{eq.48}
\tau_n=T \wedge \inf\{t\geq0;\, v_{n}(t) \notin [1/n,n]\}.
\end{equation}
Note that $\tau_n \uparrow T$ and the solutions $v_{n}(t)$ and $v_{n+1}(t)$ agree up to the stopping time, that is
\begin{equation} \label{eq.49}
v_{n}(t)= v_{n+1}(t)\quad \text{for } 0\leq t \leq \tau_n,
\end{equation}
which means that $\tau_{n+1}\geq \tau_n$ when $n+1\geq n$.\\
\textbf{Case I}: If $0\leq v_n(t)< 1/n$, $t \in [0,T]$; then the truncated equation \eqref{eq.11} becomes
\begin{equation} \label{eq.50}
dv_{n}(t) = \kappa\left(\theta-\frac{1}{n}\right) dt + \xi \left(\frac{1}{n} \right)^{p(1/n)} dW(t), \quad t \in [0,T].
\end{equation}
Now, using the Feller's test for the equation \eqref{eq.50} yields
\begin{align}\label{eq.51}
\lim_{x \to 0^+}\mathcal{T}_{p}(x)& = \kappa\left(\theta-\frac{1}{n}\right)\geq 0.
\end{align}
Note that \eqref{eq.51} holds as long as $\theta \geq \frac{1}{n}$; however, since we're interested only in sufficiently large $n$, this is consistent with the actual meaning of $\theta$. Therefore, the solution $v_n(t)$ of \eqref{eq.50} a.s. never reaches zero since $v_0>0$. This means, for a.e. sample path $\omega \in \Omega,\, v_n(t,\omega)>0$ for all $t \in [0,T]$. Further, since $v_n(\cdot,\omega)$ is a continuous function on the compact interval $[0,T]$, for a.e. $\omega \in \Omega$ it assumes its minimum on this interval. Thus, for a.e. $\omega \in \Omega$, there is a constant $\eta(\omega)>0$ such that
\begin{align}\label{eq.52}
\inf_{0\leq t \leq T} v_n(t,\omega)=\eta(\omega).
\end{align}
Define the set
\begin{align}\label{eq.53}
B_n=\left\{\omega;\, \inf_{0\leq t \leq T} v_n(t,\omega)< \frac{1}{n}\right\}.
\end{align}
For a.e. $\omega \in \Omega$ choose $n_0(\omega) \in \mathbb{N}$ such that $n_0(\omega) \geq \max\left\{1, \frac{1}{\eta(\omega)} \right\}$. Then for $n \geq n_0(\omega)$, we have
\begin{align}\label{eq.54}
\inf_{0\leq t \leq T} v_n(t,\omega)\geq \eta(\omega) \geq \frac{1}{n},
\end{align}
and hence, $\omega \notin B_n$. Thus, $\Omega \backslash B_n=B^{c}_n=\left\{\omega;\, \inf_{0\leq t \leq T} v_n(t,\omega) \geq \frac{1}{n}\right\} \neq \emptyset$ for sufficiently large $n$. Then clearly
\begin{align} \label{eq.55}
\mathbb{P}\left\{\bigcup^{\infty}_{k=1} \bigcap^{\infty}_{n=k} B^{c}_n \right\}&=1,
\end{align}
which yields
\begin{align} \label{eq.56}
\mathbb{P}\left\{\liminf_{n \to \infty} B_n \right\}&=0.
\end{align}
\noindent \textbf{Case II}: If $v_n(t)>n$, $t \in [0,T]$; then using the Markov inequality and Lemma \ref{Lem:2.1} it reads
\begin{align} \label{eq.57}
\mathbb{P}\left\{\omega;\, \sup_{0\leq t \leq T}|v_{n+1}(t)-v_{n}(t)|>0 \right\}&=\mathbb{P}\left\{\omega;\, \sup_{0\leq t \leq T}|v_{n}(t)|> n \right\}\nonumber \\
&\leq \frac{1}{n^2}\,\mathbb{E}\sup_{0\leq t \leq T}|v_{n}(t)|^2 \nonumber \\
&\leq \frac{1}{n^2}(1+ 3\mathbb{E}|v_0|^2)\exp(3 KT(T+4))<\infty.
\end{align}
Set $D_n=\left\{\omega;\, \sup_{0\leq t \leq T} |v_n(t,\omega)|> n \right\}$. Then using the Borel-Cantelli lemma we obtain
\begin{align} \label{eq.58}
\mathbb{P}\left\{\limsup_{n \to \infty}D_n \right\}&=0.
\end{align}
Next, define
\begin{align} \label{eq.59}
\lim_{n \to \infty} v_n(t,\omega)=v(t,\omega)\,\, \text{ if }\, \omega \notin \lim_{n \to \infty} (B_n \cup D_n).
\end{align}
Then $v_n(t,\omega)=v(t,\omega)$ for all $t \in [0,T]$ if $\omega \notin B_n \cup D_n$. \\
Now, applying all the information gathered so far to the truncated equation  \eqref{eq.11}, we obtain
\begin{equation} \label{eq.60}
v(t) = v_0 + \int_0^t f(v(s)) ds + \int_0^t g(v(s)) dW(s)\,\, \text{a.s. if }\omega \notin B_n \cup D_n,\,\, t \in [0,T],
\end{equation}
that is, $v(t)$ is the unique solution of \eqref{eq.1} defined at all times  $t \in [0,T]$.
\end{proof}

\section{Moment estimation of the solution}\label{Sec4}

In this subsection, by assuming that $v(t)$, $t\in[0,T]$ with the initial value $v(0)=v_0>0$ is the unique solution for the equation \eqref{eq.1}, we obtain an estimate on the $m$th moment of the solution.
\begin{theorem}\label{Thrm:4.1}
Let $m\geq 2$ be some positive integer and $\mathbb{E}v_0^{m}<\infty$. Then
\begin{align} \label{eq.61}
\mathbb{E}v(t)^{m} &\leq 2^{m-1} (1+\mathbb{E}v_0^{m})\exp(C_mt),
\end{align}
where $C_m=m\kappa(\theta+1)+\frac{\xi^2}{2} m(m-1)$.
\end{theorem}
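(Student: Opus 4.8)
\noindent The plan is to apply It\^o's formula to the function $\phi(x)=x^m$ along the solution $v(t)$. This is legitimate since $\phi$ is $C^2$ and, by Lemma \ref{Lem:2.3}, $v(t)>0$ for all $t\in[0,T]$, so the diffusion coefficient $\xi v(\cdot)^{p(v(\cdot))}$ is well defined along the path. Using $dv(t)=\kappa(\theta-v(t))\,dt+\xi v(t)^{p(v(t))}\,dW(t)$ and $(dv(t))^2=\xi^2 v(t)^{2p(v(t))}\,dt$, It\^o's formula yields
\begin{align*}
v(t)^m &= v_0^m + \int_0^t m\kappa\,v(s)^{m-1}(\theta-v(s))\,ds \\
&\quad + \frac{\xi^2}{2}m(m-1)\int_0^t v(s)^{m-2+2p(v(s))}\,ds \\
&\quad + \int_0^t m\xi\,v(s)^{m-1+p(v(s))}\,dW(s),
\end{align*}
and note that no derivative of $p$ is needed here. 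Because the stochastic integral is only a local martingale a priori, I would localize with the stopping times $\tau_N=T\wedge\inf\{t\ge 0:\ v(t)\notin[1/N,N]\}$ — the same family appearing in the proof of Theorem \ref{Thrm:3.1} — for which $\tau_N\uparrow T$ almost surely by the non-explosion and strict-positivity facts already established in Theorem \ref{Thrm:3.1} and Lemma \ref{Lem:2.3}. On $[0,t\wedge\tau_N]$ the integrand $v(\cdot)^{m-1+p(v(\cdot))}$ is bounded, so the stopped stochastic integral is a true martingale with zero mean.

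\noindent Taking expectations in the stopped identity removes the martingale term and leaves, with $u_N(t):=\mathbb{E}\,v(t\wedge\tau_N)^m$,
\begin{align*}
u_N(t) &= \mathbb{E}\,v_0^m + \mathbb{E}\!\int_0^{t\wedge\tau_N}\! m\kappa\,v(s)^{m-1}(\theta-v(s))\,ds \\
&\quad + \frac{\xi^2}{2}m(m-1)\,\mathbb{E}\!\int_0^{t\wedge\tau_N}\! v(s)^{m-2+2p(v(s))}\,ds.
\end{align*}
The key structural point, which is exactly where hypothesis $(\mathbf{p_1})$ enters, is that $1/2\le p(v(s))\le 1$ forces the random exponent $m-2+2p(v(s))$ to lie in $[m-1,m]$; together with the elementary inequality $x^r\le 1+x^m$ valid for $x>0$ and $0\le r\le m$, and after discarding the favourable term $-m\kappa v(s)^m$ and bounding $m\kappa\theta\le m\kappa(\theta+1)$, every term on the right is dominated by $C_m\,(1+v(s)^m)$ with $C_m=m\kappa(\theta+1)+\frac{\xi^2}{2}m(m-1)$. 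Hence $u_N(t)\le\mathbb{E}\,v_0^m+C_m\int_0^t(1+u_N(s))\,ds$, uniformly in $N$.

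\noindent Applying Gronwall's inequality to $1+u_N(\cdot)$ then gives $u_N(t)\le 2^{m-1}(1+\mathbb{E}\,v_0^m)\exp(C_mt)$, the factor $2^{m-1}$ comfortably absorbing the slack from the crude elementary estimates (of $(a+b)^m\le 2^{m-1}(a^m+b^m)$ and $x^{m-1}\le 1+x^m$ type) used above. Finally, since $v(t\wedge\tau_N)^m\to v(t)^m$ a.s. as $N\to\infty$, Fatou's lemma yields $\mathbb{E}\,v(t)^m\le\liminf_{N\to\infty}u_N(t)\le 2^{m-1}(1+\mathbb{E}\,v_0^m)\exp(C_mt)$, which is \eqref{eq.61}.

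\noindent I expect the main obstacle to be the integrability bookkeeping rather than any isolated hard estimate: a priori there is no control on $\mathbb{E}\int_0^t v(s)^{2(m-1)+2p(v(s))}\,ds$ (a quantity of order $2m$, higher than the moment being estimated), so one genuinely cannot take expectations in the unstopped It\^o identity, and the localization is indispensable. The subtlety is to keep the Gronwall bound on $u_N$ \emph{uniform in $N$} — which works precisely because $C_m$ does not depend on $N$ — so that Fatou closes the argument; the other ingredient, $\tau_N\uparrow T$ a.s. (no explosion, no absorption at $0$), is already available from Theorem \ref{Thrm:3.1} and Lemma \ref{Lem:2.3}.
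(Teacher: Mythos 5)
Your proposal is correct and follows essentially the same route as the paper: It\^o's formula applied to $x^m$, localization by the stopping times of Theorem \ref{Thrm:3.1} to kill the stochastic integral, the bound $v^{m-2+2p(v)}\le 1+v^m$ coming from $(\mathbf{p_1})$, Gronwall, and passage to the limit in $N$. Your write-up is in fact a little more careful than the paper's at two points — you explicitly invoke Fatou's lemma for the final limit and you track where the uniformity in $N$ is used — but these are refinements of, not departures from, the same argument.
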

\begin{proof}
We apply It\^{o}'s formula along with the stopping time argument. Then
\begin{align} \label{eq.62a}
dv(t)^{m} &= m\kappa\theta v(t)^{m-1}dt- m\kappa v(t)^{m}dt+ m\xi v(t)^{m-1+p(v(t))}dW(t)  \nonumber \\
&+\frac{\xi^2}{2} m(m-1) v(t)^{m-2+2p(v(t))}dt.
\end{align}
Integrating over $t\in[0,T]$, and considering the assumptions on $p(\cdot)$ it reads
\begin{align} \label{eq.62b}
v(t)^{m} & \leq (1+v_0)^{m} + m\kappa\theta \int_{0}^{t} v(s)^{m-1}ds+ m\kappa \int_{0}^{t} v(s)^{m}ds  \nonumber \\
&+ m\xi \int_{0}^{t} v(s)^{m-1+p(v(s))}dW(s)+ \frac{\xi^2}{2} m(m-1)\int_{0}^{t} v(s)^{m-2+2p(v(s))}ds \nonumber \\
& \leq 2^{m-1}(1+v_0^{m}) + \left(m\kappa(\theta+1)+\frac{\xi^2}{2} m(m-1) \right)\int_{0}^{t} v(s)^{m}ds  \nonumber \\
& + m\xi \int_{0}^{t} v(s)^{m-1+p(v(s))}dW(s).
\end{align}
For every integer $n\geq 1$, let $\tau_n$ be the stopping time as defined in \eqref{eq.48}. Since
\begin{align} \label{eq.63}
v(s)^{2(m-1+p(v(s)))}&\leq v(s)^{2(m-1+p^+)}\leq v(s)^{2(m+p^+)} < n^{2(m+p^+)}<\infty,
\end{align}
for $0\leq s \leq t \wedge \tau_n$, we have
\begin{align} \label{eq.64}
\int_{0}^{t \wedge \tau_n}\mathbb{E}v(s)^{2(m-1+p(v(s)))}ds < \infty.
\end{align}
Thus, taking expectations and using the zero mean property of the It\^{o} integral yields
\begin{align} \label{eq.65}
\mathbb{E}v(t \wedge \tau_n)^{m} & \leq 2^{m-1}(1+\mathbb{E}v_0^{m}) + C_m \int_{0}^{t \wedge \tau_n} \mathbb{E}v(s)^{m}ds  \nonumber \\
& \leq 2^{m-1}(1+\mathbb{E}v_0^{m}) + C_m \int_{0}^{t} \mathbb{E}v(s \wedge \tau_n)^{m}ds.
\end{align}
Applying the Gronwall inequality gives
\begin{align} \label{eq.66}
\mathbb{E}v(t \wedge \tau_n)^{m} & \leq 2^{m-1}(1+\mathbb{E}v_0^{m})\exp(C_mt).
\end{align}
Lastly, letting $n \to \infty$, and considering that $\tau_n \to T$ as $n \to \infty$ a.s., concludes
\begin{align} \label{eq.67}
\mathbb{E}v(t)^{m} & \leq 2^{m-1}(1+\mathbb{E}v_0^{m})\exp(C_mt).
\end{align}
\end{proof}

\section{Numerical implementation}\label{Sec5}
In this section, we test our model, GM, with three alternative variable exponent functions $p(\cdot)$. Each variable exponent function is paired with the classical CIR model in sample‐path and distributional comparisons. The findings, visualized in Figures \ref{fig1:cir_p1}, \ref{fig2:cir_p2} and \ref{fig3:cir_p3}, support the theoretical validity of the GM model as a meaningful generalization of the CIR model.
\subsection{Experiment design}
\noindent We compare the solution of the CIR model
against
\begin{equation*}
\mathrm{(GM)}_i
\quad dv(t)=\kappa(\theta - v(t))dt + \xi v(t)^{p_i(v(t))}dW(t),
\quad i=1,2,3,
\end{equation*}
with
\begin{equation*}
p_1(v) = 0.5 + 0.3(1 - e^{-v}), \quad p_2(v) = 0.6 + 0.2 \tanh(v), \quad \text{and} \quad p_3(v) = 0.55 + 0.2 \frac{v}{1 + v}.
\end{equation*}
Parameters are set as $\kappa = 2.0$, $\theta = 0.05$, $\xi = 0.3$, $v(0) = 0.05$, $T = 1.0$, $\Delta t = 10^{-3}$, and $N = 5000$ sample paths. Random draws use "np.random.seed(42)" for the Monte Carlo simulatons and "np.random.seed(0)" for the single paired sample path. We enforce $v_{k+1}=\max\{\,v_k+\kappa(\theta-v_k)\Delta t+\xi\,v_k^{p(v_k)}\Delta W_k,\;10^{-6}\}$, where the lower bound $10^{-6}$ is the positivity truncation, sharing $\Delta W_k\sim N(0,\Delta t)$ between CIR and each generalized model, $(GM)_{i}$.
The experiments use identical Brownian increments to isolate the impact of the variable exponent $p(\cdot)$. The Euler-Maruyama method is used to discretize the stochastic differential equations. To obtain the distribution of the process, Monte Carlo simulations are performed.

\subsection{Analysis}
\noindent All three variable exponent functions preserve the positive‐mean‐reverting behavior of the CIR process. The choice of $p(\cdot)$ significantly affects both sample path behavior and terminal distributions. The rate of change of $p(\cdot)$ influences the volatility clustering behavior. Empirical moments remain below the theoretical bounds obtained by Theorem \ref{Thrm:4.1}. Our simulations with different types of $p(\cdot)$ has shown that GM provides significant control over volatility dynamics while maintaining theoretical soundness since each exponent function offers distinct advantages for modeling specific market regimes.For the variable exponent functions used in this paper, for example:\\
$p_1$ is flexible due to its smooth transition, but its sensitivity to large $v$ values may lead to instability. Thus, it is suitable for markets with moderate regime shifts, where volatility increases gradually with variance.\\
$p_2$ captures strong regime dependence but its rapid growth for large $v$ may exaggerate volatility. Therefore, it might be ideal for markets with abrupt regime changes, such as during financial crises.\\
$p_3$ offers stability and gradual adjustment; however, its limited range may under represent extreme volatility, and hence, it could be appropriate for stable markets with mild regime-dependent dynamics, such as steady economic growth periods.\\
As a next step, one may extend these experiments by calibrating the parameters $(\kappa,\theta,\xi)$ to market data, comparing implied‐volatility surfaces, or studying long‐time behavior under each GM$_i$, either by using the variable exponent functions $p(\cdot)$ provided in this paper or considering different ones.

\begin{figure}[tp]
  \centering
  \includegraphics[width=0.8\textwidth]{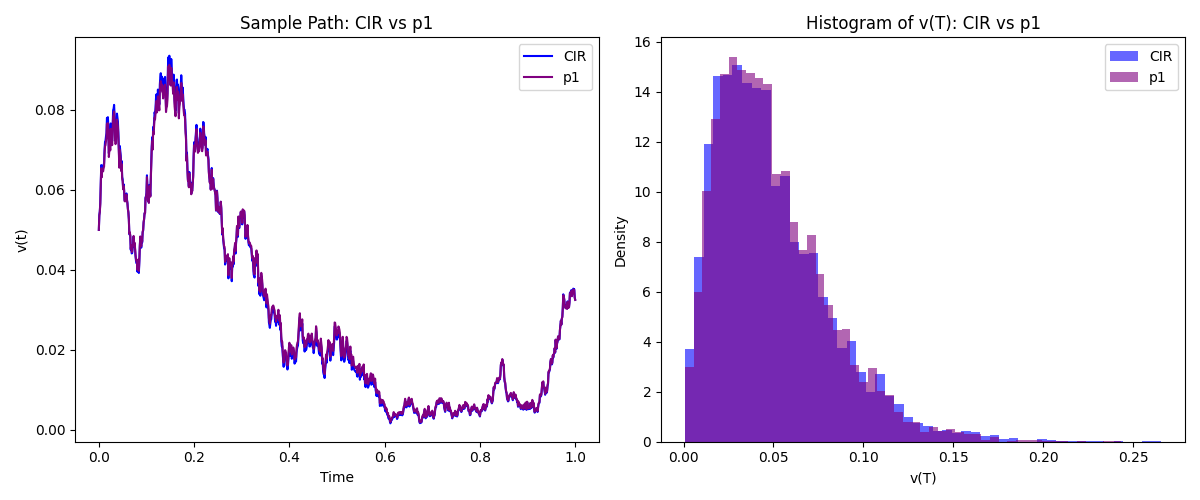}
  \caption{Sample path (left) and terminal distribution (right) for CIR and $p_1(v)$.}  \label{fig1:cir_p1}
\end{figure}

\begin{figure}[tp]
  \centering
  \includegraphics[width=0.8\textwidth]{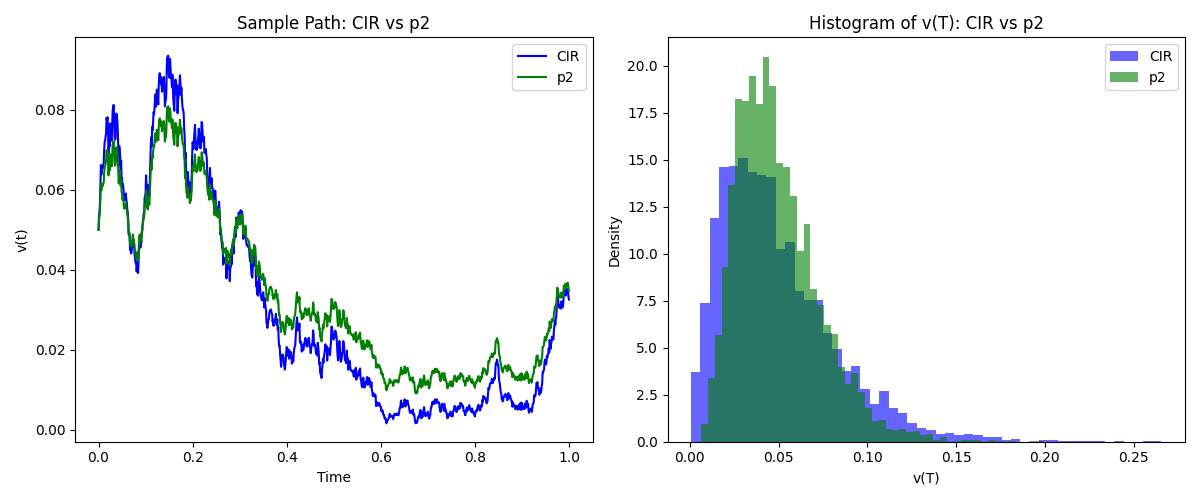}
  \caption{Sample path (left) and terminal distribution (right) for CIR and $p_2(v)$.}  \label{fig2:cir_p2}
\end{figure}

\begin{figure}[tp]
  \centering
  \includegraphics[width=0.8\textwidth]{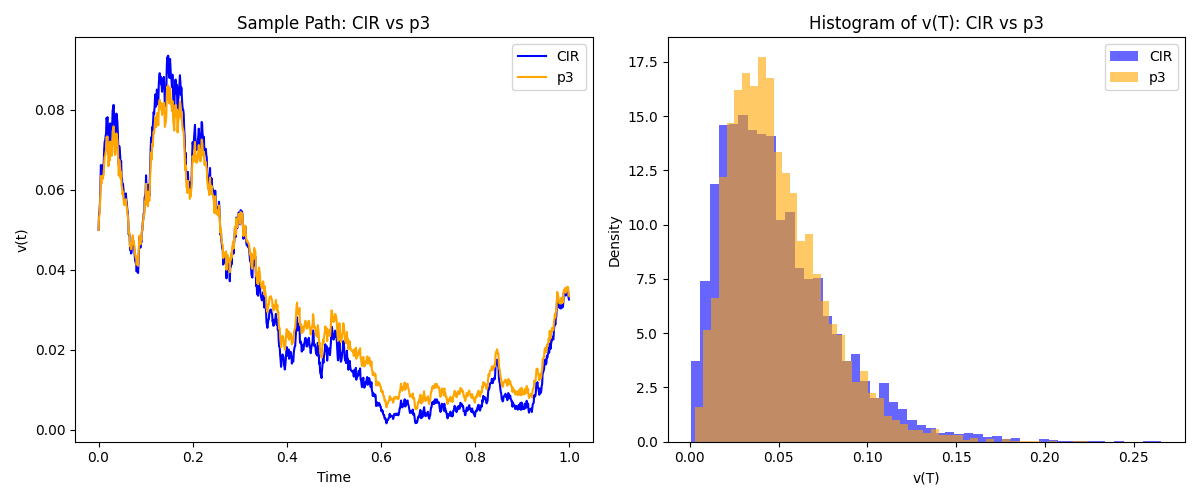}
  \caption{Sample path (left) and terminal distribution (right) for CIR and $p_3(v)$.} \label{fig3:cir_p3}
\end{figure}

\begin{remark} We aware of the fact that the explicit Euler-Maruyama method lacks the usual strong and weak convergence assurances when applied to stochastic differential equations whose coefficients fail to satisfy Lipschitz condition (see e.g., \cite{hjk11}). Fortunately, a great amount of research in numerical analysis has provided significant stabilized and implicit modifications, such as tamed Euler, semi-implicit approaches, backward Euler-Maruyama, which help restore convergence assurances and preserve qualitative characteristics across many non-Lipschitz cases. (see e.g., \cite{a10,hms02,s13}).\\
Further, we'd like to point out that since the numerical experiments in this paper solely intended to visualize the qualitative effects of replacing $\sqrt{v}$ by $v^{p(v)}$, we used the explicit EM scheme with a small time step together with a positivity truncation (lower bound $10^{-6}$) for stability in Monte Carlo comparisons. As well, the existence, uniqueness and higher-moment estimates obtained in the paper are proved for the continuous-time stochastic differential equations and do not depend on the choice of any numerical discretization. The Monte Carlo simulations are included exclusively for demonstration purposes; while they support the qualitative predictions of the theory and are complementary, they neither constitute formal proof nor serve as a replacement for the rigorous analysis provided.
\end{remark}

\section{Conclusion}\label{Sec6}
We propose a \textit{variable exponent}  generalization of the CIR model by replacing $\sqrt{v(t)}$ with $v(t)^{p(v(t))}$, where the state-dependent exponent function $p(\cdot)$ satisfies natural regularity and boundedness conditions. Existence, uniqueness, and higher‐moment estimates of the solution are obtained. The numerical experiments provide strong evidence that the generalized CIR model with variable exponent functions offers a flexible framework for volatility modeling while maintaining the essential characteristics of the classical CIR process. The choice of the specific form of $p(\cdot)$ allows for fine-tuning the model's behavior to match empirical observations or theoretical requirements. The success of these experiments validates both the theoretical analysis and the numerical implementation, confirming that the proposed generalization represents a reasonable generalization of the classical CIR framework.

\section{Appendix: It\^o vs Stratonovich interpretations of the model}\label{Sec7}
In stochastic calculus, the It\^o and Stratonovich integrals represent two distinct interpretations of SDEs. That is, we can interpret our model by using either
the  It\^o form, i.e. \eqref{eq.1}
\begin{align}\label{eq.68}
\mathrm{(GM)}
\quad
dv(t) &= \kappa(\theta - v(t))dt + \xi v(t)^{p(v(t))} dW(t),
\end{align}
or the Stratonovich form
\begin{align}\label{eq.69}
\mathrm{(GMS)}
\quad
dv(t) &= \kappa(\theta - v(t))dt + \xi v(t)^{p(v(t))}\circ dW(t).
\end{align}
While both forms are mathematically valid, they yield different results when applied to models with state-dependent diffusion coefficients. On the other hand, using the conversion formula, we can write It\^o counterpart of GMS as follows
\begin{align}\label{eq.70}
\mathrm{(GMSI)}
\quad
dv(t) &= \left(\kappa(\theta - v(t)) +  h(v(t))\right) dt + \xi v(t)^{p(v(t))} dW(t),
\end{align}
where
\begin{align}\label{eq.70a}
h(v(t))=\frac{1}{2} \xi^2 v(t)^{2p(v(t))} \left(p^{\prime}(v(t)) \log v(t) + \frac{p(v(t))}{v(t)}\right).
\end{align}
is the drift-correction term. Since GMSI is mathematically equivalent to GMS, to conduct our analysis in order to underline the differences in results, it is enough to compare GM with GMSI.\\

First, we'd like to note that GMSI is also well-defined; that is, if the process $v(t)$ satisfies GMSI, then under the assumptions of Lemma \ref{Lem:2.3}, $v(t)$ is strictly positive for $t \in [0,T]$. Indeed, arguing similarly as with Lemma \ref{Lem:2.3}, i.e. applying Feller’s non-attainability test to GMSI, it reads
\begin{align}\label{eq.71}
\mathcal{S}_{p}(x)&:=\kappa(\theta - x)+\frac{1}{2}\xi^2 x^{2p(x)}\left(p^{\prime}(x)\log(x)+\frac{p(x)}{x}\right)-\xi^2 x^{2p(x)}\left(p^{\prime}(x)\log(x)+\frac{p(x)}{x}\right)\\
& \geq \kappa(\theta - x)-\frac{1}{2}\xi^2 \left(|p^{\prime}(x)||\log(x)|x^{2p(x)}+x^{2p(x)-1}p(x)\right), \quad 0<x<\delta,  \nonumber
\end{align}
which yields
\begin{align}\label{eq.72}
\lim_{x \to 0^+}\mathcal{S}_{p}(x)& \geq \lim_{x \to 0^+} \left[\kappa(\theta - x)-\frac{1}{2}\xi^2 \left(|p^{\prime}(x)||\log(x)|x^{2p(x)}+x^{2p(x)-1}p(x)\right)\right]=\kappa \theta >0.
\end{align}
Therefore, the diffusion process $v(t)$ satisfying GMSI started in $(0,\infty)$ never hits zero a.s., and hence, GMSI is well-defined.\\

In the sequel, to analyze  moments dynamics, we will derive the formal equations for the evolution of the mean and variance for both models to highlight the structural differences.\\
Assume $v(t)$ ($v(0)>0$) be the solution to GM and $v_S(t)$ ($v_S(0)>0$) be the solution to GMSI, with their respective means $\mathbb{E}v(t)$ and $\mathbb{E}v_S(t)$, $t \in [0,T]$. Now, taking expectations and using the zero mean property of the It\^{o} integral yields
\begin{align}\label{eq.73}
\frac{d}{dt}\mathbb{E}v(t) &= \kappa(\theta -\mathbb{E}v(t)),
\end{align}
and
\begin{align}\label{eq.74}
\frac{d}{dt}\mathbb{E}v_S(t) &= \kappa(\theta -\mathbb{E}v_S(t)) + \mathbb{E} h(v_S(t)).
\end{align}
Depending on the specific form of $p(v)$ (increasing or decreasing) and the value of $v$, the sign of $\mathbb{E} h(v)$ is determined by the state-dependent function $ v \mapsto p^{\prime}(v) \log v+ \frac{p(v)}{v}=\frac{d}{dv}\left[p(v) \log v \right]$. That is,
\begin{equation}\label{eq.75}
\mathbb{E} h(v) \begin{cases} > 0 , & \text{if }\, p(v)\log v \text{ is strictly increasing}, \\ <0, & \text{if  }\, p(v)\log v \text{ is strictly decreasing}. \end{cases}
\end{equation}
Thus, GMSI is expected to produce a lower or higher mean than GM does. Notice that GMSI and GM can only produce the same mean if $p(v) \log v \equiv constant$, in which case the two models coincides. However, this is not the case under the hypotheses $(\mathbf{p_1})$-$(\mathbf{p_2})$. \\

Next, consider the second moments $\mathbb{E}v^2(t)$ and $\mathbb{E}v^2_S(t)$, $t \in [0,T]$, of GM and GMSI,  respectively.
Applying the It\^o formula for  GM and GMSI  yields
\begin{align}\label{eq.76}
dv^{2}(t) &= \left[2\kappa(\theta - v(t))v(t)+\xi^{2}v(t)^{2p(v(t))}\right]dt + 2\xi v(t)^{1+p(v(t))} dW(t),
\end{align}
and
\begin{align}\label{eq.77}
dv^{2}(t) &= \left[2\kappa(\theta - v(t))v(t)+\xi^{2}v(t)^{2p(v(t))}+2v(t)h(v(t))\right]dt + 2\xi v(t)^{1+p(v(t))} dW(t),
\end{align}
respectively. Taking expectations provides
\begin{align}\label{eq.78}
\frac{d}{dt}\mathbb{E}v^2(t) &= \mathbb{E}\left[2\kappa(\theta - v(t))v(t)+\xi^{2}v(t)^{2p(v(t))}\right],
\end{align}
and
\begin{align}\label{eq.79}
\frac{d}{dt}\mathbb{E}v^2_S(t) &= \mathbb{E}\left[2\kappa(\theta - v_S(t))v_S(t)+\xi^{2}v_S(t)^{2p(v_S(t))}\right]+\mathbb{E}\left[2v_S(t)h(v_S(t))\right].
\end{align}
The rate of change of the second moment for GMSI is augmented by the term $\mathbb{E}\left[2v_S(t)h(v_S(t))\right]$. This implies that the two interpretations lead to different dynamics for the second moment and, consequently, the variance.

\begin{remark}
The choice between It\^o and Stratonovich interpretations is not merely technical, rather, it has practical consequences for the behavior and analysis of stochastic models. Our generalized CIR model \eqref{eq.1} extends the classical Cox-Ingersoll-Ross (CIR) framework, commonly used in financial mathematics to model positive processes like interest rates or volatility. In this context, the It\^o interpretation  is standard, as it aligns with martingale-based methods for option pricing and ensures analytical tractability for deriving moment bounds and positivity under our assumptions $(\mathbf{p_1})$ and $(\mathbf{p_2})$. While the parametric noise (via $v(t)^{p(v(t))}$) introduces a state-dependent diffusion, the Itô framework remains appropriate and widely accepted for such financial models, as seen in standard CIR and related stochastic volatility models (e.g., Heston model); however, the existence of this parametric noise not necessitate Stratonovich interpretation in financial modeling since the Stratonovich dynamic lacks martingale property. We believe that the Stratonovich form may be more relevant for physical systems with parametric noise, such as in physical systems with multiplicative noise, where it better preserves symmetries like the chain rule.
\end{remark}

\section*{Data usage statement}
\noindent All data used in the numerical experiments are generated by Monte Carlo simulation, no proprietary or external datasets were used. All figures and distributions obtained purely from simulating the stochastic differential equations studied in the paper by using Python programming language.
\section*{Acknowledgments}
\noindent This work was supported by Athabasca University Research Incentive Account [140111 RIA].
\section*{Conflict of interest}
\noindent The author declares that he has no conflict of interest.
\section*{ORCID}
\noindent https://orcid.org/0000-0002-6001-627X

\newpage

\end{document}